\newtheorem{theorem}{Theorem}[section]
\newtheorem{lemma}[theorem]{Lemma}
\newtheorem{prop}[theorem]{Proposition}
\newtheorem{cor}[theorem]{Corollary}
\theoremstyle{definition}
\theoremstyle{remark}
\numberwithin{equation}{section}
\let \la=\lambda
\let \e=\varepsilon
\let \d=\delta
\let \o=\omega
\let \a=\alpha
\let \f=\varphi
\let \b=\beta
\let \O=\Omega
\let \G=\Gamma
\let \ga=\gamma
\begin{document}

\title[sharp aperture-weighted estimates]
{On sharp aperture-weighted estimates for square functions}

\author{Andrei K. Lerner}
\address{Department of Mathematics,
Bar-Ilan University, 52900 Ramat Gan, Israel}
\email{aklerner@netvision.net.il}

\begin{abstract}
Let $S_{\a,\psi}(f)$ be the square function defined by means of the cone in ${\mathbb R}^{n+1}_{+}$ of aperture $\a$, and a standard kernel $\psi$.
Let $[w]_{A_p}$ denote the $A_p$ characteristic of the weight $w$. We show that for any $1<p<\infty$ and $\a\ge 1$,
$$\|S_{\a,\psi}\|_{L^p(w)}\lesssim \a^n[w]_{A_p}^{\max(\frac{1}{2},\frac{1}{p-1})}.$$
For each fixed $\a$ the dependence on $[w]_{A_p}$ is sharp. Also, on all class $A_p$ the result is sharp in $\a$. 
Previously this estimate was proved in the case $\a=1$ using the intrinsic
square function. However, that approach does not allow to get the above estimate with sharp dependence on $\a$.
Hence we give a different proof suitable for all $\a\ge 1$ and  avoiding the notion of the intrinsic square function.
\end{abstract}

\keywords{Littlewood-Paley operators, sharp
weighted inequalities, sharp aperture dependence.}

\subjclass[2010]{42B20,42B25}

\maketitle

\section{Introduction}
Let $\psi$ be an integrable function, $\int_{{\mathbb R}^n}\psi=0$, and, for some $\e>0$,
\begin{equation}\label{cond}
|\psi(x)|\le \frac{c}{(1+|x|)^{n+\e}}\quad\text{and}\quad\int_{{\mathbb R}^n}|\psi(x+h)-\psi(x)|dx\le c|h|^{\e}.
\end{equation}

Let ${\mathbb R}^{n+1}_+={\mathbb R}^{n}\times{\mathbb R}_{+}$ and
$\G_{\a}(x)=\{(y,t)\in {\mathbb{R}}^{n+1}_+:|y-x|<\a t\}$.  Set
$\psi_t(x)=t^{-n}\psi(x/t)$. Define the square function $S_{\a,\psi}(f)$ by
$$
S_{\a,\psi}(f)(x)=\left(\int_{\G_{\a}(x)}|f*\psi_t(y)|^2\frac{dydt}{t^{n+1}}\right)^{1/2}\quad(\a>0).
$$
We drop the subscript $\a$ if $\a=1$.

Given a weight $w$, define its $A_p$ characteristic by
$$[w]_{A_p}=\sup_{Q}\left(\frac{1}{|Q|}\int_Qw\,dx\right)
\left(\frac{1}{|Q|}\int_Qw^{-\frac{1}{p-1}}\,dx\right)^{p-1},$$
where the supremum is taken over all cubes $Q\subset {\mathbb R}^n$.

It was proved in \cite{Lerner} that for any $1<p<\infty$,
\begin{equation}\label{sqsh}
\|S_{\psi}\|_{L^p(w)}\le c_{p,n,\psi}[w]_{A_p}^{\max(\frac{1}{2},\frac{1}{p-1})},
\end{equation}
and this estimate is sharp in terms of $[w]_{A_p}$ (we also refer to \cite{Lerner} for a detailed history of
closely related results).

Similarly one can show that
\begin{equation}\label{sqsh1}
\|S_{\a,\psi}\|_{L^p(w)}\le c_{p,n,\psi}\ga(\a)[w]_{A_p}^{\max(\frac{1}{2},\frac{1}{p-1})}\quad(\a\ge 1,1<p<\infty);
\end{equation}
however, the sharp dependence on $\a$ in this estimate cannot be determined by means of the approach from \cite{Lerner}.
The aim of this paper is to find the sharp $\ga(\a)$ in (\ref{sqsh1}).

Let us explain first why the method from \cite{Lerner} gives a rough estimate for $\ga(\a)$.
The proof in \cite{Lerner} was based on the intrinsic square function $G_{\a,\b}(f)$ by M. Wilson \cite{Wil} defined as follows.
For $0<\b\le 1$, let ${\mathcal C}_{\b}$ be the family of functions
supported in the unit ball with mean zero and such
that for all $x$ and $x'$, $|\f(x)-\f(x')|\le |x-x'|^{\b}$. If $f\in
L^1_{\text{loc}}({\mathbb R}^n)$ and $(y,t)\in {\mathbb R}^{n+1}_+$,
we define
$
A_{\b}(f)(y,t)=\sup_{\f\in {\mathcal C}_{\b}}|f*\f_t(y)|
$
and
$$
G_{\a,\b}(f)(x)=\left(\int_{\G_{\a}(x)}
\big(A_{\b}(f)(y,t)\big)^2\frac{dydt}{t^{n+1}}\right)^{1/2}.
$$
Set $G_{1,\b}(f)=G_{\b}(f)$.

The intrinsic square function has several interesting features (established in \cite{Wil}).
First, though $G_{\b}(f)$ is defined by means of kernels with uniform compact support, it
pointwise dominates $S_{\psi}(f)$. Also there is a
pointwise relation between $G_{\a,\b}(f)$ with different apertures:
\begin{equation}\label{ap}
G_{\a,\b}(f)(x)\le \a^{(3/2)n+\b}G_{\b}(f)(x)\quad(\a\ge 1).
\end{equation}
Notice that for the usual square functions $S_{\a,\psi}(f)$ such a pointwise relation is not available.

In \cite{Lerner}, (\ref{sqsh}) with $G_{\b}(f)$ instead
of $S_{\psi}(f)$ was obtained. Combining this with (\ref{ap}), we would obtain that one can take $\ga(\a)=\a^{(3/2)n+\b}$ in (\ref{sqsh1})
assuming that $\psi$ is compactly supported. For non-compactly supported $\psi$ some additional ideas from \cite{Wil} can be used that
lead to even worst estimate on $\ga(\a)$. Observe also that it is not clear to us whether (\ref{ap}) can be improved.

It is easy to see that the dependence $\ga(\a)=\a^{(3/2)n+\b}$ in (\ref{sqsh1}) is far from the sharp one. For instance, it is obvious that the
information on $\b$ should not appear in (\ref{sqsh1}). All this indicates that the intrinsic square function approach is not suitable for our
purposes in determining the sharp $\ga(\a)$.

Suppose we seek for $\ga(\a)$ in the form $\ga(\a)=\a^{r}$. Then a simple observation shows that
$r\ge n$ for any $1<p<\infty$. Indeed, consider the Littlewood-Paley function $g_{\mu,\psi}^*(f)$ defined by
$$
g_{\mu,\psi}^*(f)(x)=\biggl(\iint_{{\mathbb
R}^{n+1}_+}\biggl(\frac{t}{t+|x-y|}\biggr)^{\mu
n}|f*\psi_t(y)|^2\frac{dydt}{t^{n+1}}\biggr)^{1/2}.
$$
Using the standard estimate
$$
g_{\mu,\psi}^*(f)(x)\le S_{\psi}(f)(x)+\sum_{k=0}^{\infty}2^{-k\mu n/2}S_{2^{k+1},\psi}(f)(x),
$$
we obtain that (\ref{sqsh1}) for some $p=p_0$ and $\ga(\a)=\a^{r_0}$ implies
\begin{equation}\label{stan}
\|g_{\mu,\psi}^*\|_{L^{p_0}(w)}\lesssim \Big(\sum_{k=0}^{\infty}2^{-k\mu n/2}2^{kr_0}\Big)
[w]_{A_{p_0}}^{\max(\frac{1}{2},\frac{1}{p_0-1})}.
\end{equation}
This means that if $\mu>2r_0/n$, then $g_{\mu,\psi}^*$ is bounded on $L^{p_0}(w)$, $w\in~A_{p_0}$. From this, by the Rubio de Francia extrapolation
theorem, $g_{\mu,\psi}^*$ is bounded on the unweighted $L^p$ for any $p>1$, whenever $\mu>2r_0/n$. But it is well known \cite{F} that
$g_{\mu,\psi}^*$ is not bounded on $L^p$ if $1<\mu<2$ and  $1<p\le 2/\mu$. Hence, if $r_0<n$, we would obtain a contradiction to the latter fact for $p$ sufficiently
close to $1$.

Our main result shows that for any $1<p<\infty$ one can take the optimal power growth $\ga(\a)=\a^n$.

\begin{theorem}\label{mainr} For any $1<p<\infty$ and for all $1\le \a<\infty$,
$$
\|S_{\a,\psi}\|_{L^p(w)}\le c_{p,n,\psi}\a^n[w]_{A_p}^{\max(\frac{1}{2},\frac{1}{p-1})}.
$$
\end{theorem}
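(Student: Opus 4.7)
The plan is to establish, via Lerner's local mean oscillation formula, a sparse-type pointwise domination of $(S_{\a,\psi}f)^2$ whose constant is $\a^{2n}$, and then to conclude from the known sharp weighted inequality for the associated sparse $\ell^2$ operator. The essential departure from \cite{Lerner} is that we work directly with $S_{\a,\psi}$, avoiding the intrinsic square function entirely---precisely what is needed to obtain the sharp aperture exponent.

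\emph{Step 1.} Apply Lerner's decomposition formula to $h=(S_{\a,\psi}f)^2$: for a large cube $Q_0\supset\mathrm{supp}\,f$ there exists a sparse family $\mathcal S\subset\mathcal D(Q_0)$ with
\[
 h(x)\le c_n\sum_{Q\in\mathcal S}\o_{\la_n}(h;Q)\,\mathbf{1}_Q(x)+m_h(Q_0)\quad\text{a.e.\ in }Q_0.
\]
Sending $Q_0\uparrow\mathbb R^n$ kills the median term, so it suffices to prove a pointwise oscillation estimate of the form $\o_\la((S_{\a,\psi}f)^2;Q)\le C_n\a^{2n}(\text{maximal average of }|f|\text{ near }Q)^2$.

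\emph{Step 2.} For the oscillation estimate I would split $f=f\mathbf{1}_{c\a Q}+f\mathbf{1}_{(c\a Q)^c}=:f_1+f_2$ with $c$ dimensional. The local part is handled by combining Chebyshev with the elementary identity $\|S_{\a,\psi}g\|_{L^2}^2\simeq\a^n\int_0^\infty\|g*\psi_t\|_{L^2}^2\,dt/t$ (a Fubini computation using that the horizontal slice $\G_\a(x)\cap\{t=t_0\}$ has measure $\simeq(\a t_0)^n$); one factor $\a^n$ comes from the $L^2$-operator norm, and another from the enlargement factor $|c\a Q|/|Q|\simeq\a^n$. The non-local part is handled via the smoothness and decay hypotheses in (\ref{cond}): for $x,x'\in Q$, the difference $(S_{\a,\psi}f_2)^2(x)-(S_{\a,\psi}f_2)^2(x')$ is controlled by integrating $|f_2*\psi_t(y)|^2$ over the symmetric difference $\G_\a(x)\triangle\G_\a(x')$, whose horizontal slice at height $t$ has measure $\simeq(\a t)^{n-1}\,\mathrm{diam}(Q)$; combined with the pointwise decay of $\psi_t$, this yields a matching $\a^{2n}$-type control.

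\emph{Step 3 and main obstacle.} Assembling the sparse domination from Steps~1 and~2 reduces the theorem to the already known sharp weighted estimate for the sparse $\ell^2$ operator $g\mapsto(\sum_{Q\in\mathcal S}\langle|g|\rangle_Q^2\,\mathbf{1}_Q)^{1/2}$, whose $L^p(w)$-norm is at most $C_{n,p}[w]_{A_p}^{\max(1/2,1/(p-1))}$; the $c\a$-enlargement is absorbed into an adjusted sparse family without altering the aperture dependence, and a square root delivers Theorem~\ref{mainr}. The decisive step will be the far-field estimate in Step~2: the wide cone $\G_\a$ naturally injects extra powers of $\a$ from its volume and from shifts of the apex (precisely the source of the loss $\a^{n/2+\b}$ in (\ref{ap})), and one must carefully balance the symmetric-difference geometry of $\G_\a(x)\triangle\G_\a(x')$ against the Dini regularity of $\psi$ from (\ref{cond}) to pin the total aperture exponent at exactly $n$, as demanded by the $g_{\mu,\psi}^*$ lower bound recalled in the introduction.
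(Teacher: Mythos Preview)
Your overall architecture---apply Lerner's decomposition to the square of the square function, then invoke the sharp weighted bound for the sparse $\ell^2$ operator---is exactly the paper's. But two points in your execution are genuine gaps, and both stem from the choice to localize at the $\a$-dependent scale $c\a Q$.

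First, the local estimate in Step~2 via Chebyshev and the $L^2$ identity produces
\[
\big((S_{\a,\psi}f_1)^2\chi_Q\big)^*(\la|Q|)\ \lesssim\ \frac{\a^n}{|Q|}\int_{c\a Q}|f|^2\ \simeq\ \a^{2n}\,\langle|f|^2\rangle_{c\a Q},
\]
an $L^2$-average, not the \emph{squared $L^1$-average} $\langle|f|\rangle_{c\a Q}^2$ that your Step~3 operator requires; with $L^2$-averages the resulting sparse bound carries the wrong $A_p$ exponent. The correct device is the weak-$(1,1)$ bound $\|S_{\a,\psi}f\|_{L^{1,\infty}}\lesssim\a^n\|f\|_{L^1}$ (Lemma~\ref{weak} and \eqref{weakt}), applied with localization at a scale \emph{independent of $\a$}: with $f_1=f\chi_{4Q}$ one gets $(S_{\a,\psi}f_1)^*(\la|Q|)^2\lesssim\a^{2n}\langle|f|\rangle_{4Q}^2$ directly. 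If instead you keep $f_1=f\chi_{c\a Q}$ and use weak-$(1,1)$, the factor $|c\a Q|/|Q|$ inserts an extra $\a^n$ and you land at $\a^{4n}$.

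Second, the assertion in Step~3 that ``the $c\a$-enlargement is absorbed into an adjusted sparse family without altering the aperture dependence'' is precisely the delicate point and is not free. Passing from $\langle|f|\rangle_{2^mQ}$ back to $\langle|f|\rangle_Q$ over a sparse family costs a factor $m^{1/2}$ (this is Lemma~\ref{estim} in the paper), so an enlargement by $c\a\sim 2^{\log_2\a}$ contributes $(\log\a)^{1/2}$ and spoils the sharp $\a^n$. The paper circumvents this by localizing at $4Q$; the price is that the oscillation estimate (Lemma~\ref{oscg}) then carries a tail $\sum_k 2^{-k\d}\langle|f|\rangle_{2^kQ}^2$, leading to the operator ${\mathcal B}(f)=\sum_m 2^{-m\d/2}{\mathcal T}^{\mathcal S}_{2,m}f$, and the $m^{1/2}$ loss from Lemma~\ref{estim} is absorbed by the geometric decay $2^{-m\d/2}$. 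A secondary but useful point: the paper replaces $S_{\a,\psi}$ by a smooth variant $\widetilde S_{\a,\psi}$ (with $\chi_{\{|x-y|<\a t\}}$ replaced by a Schwartz bump $\Phi((x-y)/\a t)$), which converts your symmetric-difference geometry into the clean gradient bound $|\Phi(\frac{x-y}{t\a})-\Phi(\frac{x_0-y}{t\a})|\le C\ell_Q/(\a t)$ and is what makes the far-field computation land exactly on $\a^{2n}$.
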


By (\ref{stan}), we immediately obtain the following.

\begin{cor}\label{corol} Let $\mu>2$. Then for any $1<p<\infty$,
$$\|g_{\mu,\psi}^*(f)\|_{L^{p}(w)}\le c_{p,n,\mu,\psi}[w]_{A_p}^{\max(\frac{1}{2},\frac{1}{p-1})}.$$
\end{cor}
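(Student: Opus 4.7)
The plan is that Corollary~\ref{corol} follows formally from Theorem~\ref{mainr} once one inserts the sharp aperture dependence $\a^n$ into the decomposition (\ref{stan}) already displayed in the introduction. So the proof is essentially a short bookkeeping calculation with no new substantive content.

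First, I would invoke the pointwise bound
$$
g_{\mu,\psi}^*(f)(x)\le S_{\psi}(f)(x)+\sum_{k=0}^{\infty}2^{-k\mu n/2}S_{2^{k+1},\psi}(f)(x),
$$
which is the standard estimate cited in the introduction: it comes from splitting ${\mathbb R}^{n+1}_+$ into the cone $\G_1(x)$ and the annular regions $\G_{2^{k+1}}(x)\setminus \G_{2^k}(x)$, and using that $t/(t+|x-y|)\le 2^{-k}$ on the $k$-th annulus, so that the weight $(t/(t+|x-y|))^{\mu n}$ contributes at most $2^{-k\mu n}$ there. Taking $L^p(w)$ norms and applying Minkowski's inequality yields
$$
\|g_{\mu,\psi}^*(f)\|_{L^p(w)}\le \|S_{\psi}(f)\|_{L^p(w)}+\sum_{k=0}^{\infty}2^{-k\mu n/2}\|S_{2^{k+1},\psi}(f)\|_{L^p(w)}.
$$

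Second, I would apply Theorem~\ref{mainr} to each summand with $\a=2^{k+1}$ (and with $\a=1$ to the leading $S_\psi$ term), producing a factor $c_{p,n,\psi}\,2^{(k+1)n}[w]_{A_p}^{\max(1/2,1/(p-1))}$ on the $k$-th summand. Pulling out $[w]_{A_p}^{\max(1/2,1/(p-1))}\|f\|_{L^p(w)}$, the remaining constant is a multiple of
$$
1+\sum_{k=0}^{\infty}2^{-k\mu n/2}\cdot 2^{(k+1)n}=1+2^n\sum_{k=0}^{\infty}2^{-kn(\mu/2-1)}.
$$

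Finally, this geometric series converges precisely when $\mu/2-1>0$, i.e., under the hypothesis $\mu>2$, and then the total constant depends only on $p$, $n$, $\mu$, $\psi$, which is exactly the form claimed in the corollary. There is no real obstacle to overcome here: the substance lies entirely in Theorem~\ref{mainr}, and the sharp $\a^n$ growth obtained there is calibrated so that the series converges exactly under the natural threshold $\mu>2$, matching the unweighted boundedness threshold for $g_{\mu,\psi}^*$ recalled in the introduction.
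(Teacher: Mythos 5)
Your proposal is correct and follows exactly the same route as the paper, which simply points to the already-displayed estimate (\ref{stan}) with $r_0=n$ and notes that the geometric series $\sum_k 2^{-k\mu n/2}2^{kn}$ converges iff $\mu>2$. You have merely spelled out the bookkeeping that the paper leaves implicit.
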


Observe that if $\mu=2$, then $g_{2,\psi}^*$ is also bounded on $L^p(w)$ for $w\in A_p$ (see \cite{MW}).
However, the sharp dependence on $[w]_{A_p}$ in the corresponding $L^p(w)$ inequality is unknown to us.

We emphasize that the growth $\ga(\a)=\a^n$ is best possible in the weighted $L^p(w)$ estimate for $w\in A_p$.
In the unweighted case a better dependence on $\a$ is known, namely, $\|S_{\a,\psi}\|_{L^p}\le c_{p,n,\psi}\a^{\frac{n}{\min(p,2)}},$
see \cite{A,T}.

Some words about the proof of Theorem \ref{mainr}. As in \cite{Lerner}, we use here the local mean oscillation decomposition.
But in \cite{Lerner} we worked with the intrinsic square function, and due to the fact that this operator is defined by uniform
compactly supported kernels, we arrived to the operator
$${\mathcal A}(f)(x)=\Big(\sum_{j,k}(f_{\ga Q_j^k})^2\chi_{Q_j^k}(x)\Big)^{1/2},$$
where $Q_j^k$ is a sparse family and $\ga>1$. This operator can be handled sufficiently easy.

Here we work with the square function $S_{\a,\psi}(f)$ directly, more precisely we consider its smooth variant
$\widetilde S_{\a,\psi}(f)$. Applying the local mean oscillation decomposition to $\widetilde S_{\a,\psi}(f)$, we obtain that $S_{\a,\psi}(f)$ is
essentially pointwise bounded by $\a^n{\mathcal B}(f)$, where
$${\mathcal B}(f)(x)=\sum_{m=0}^{\infty}\frac{1}{2^{m\d}}\Big(\sum_{j,k}(f_{2^m Q_j^k})^2\chi_{Q_j^k}(x)\Big)^{1/2}\quad(\d>0).$$
Observe that this pointwise aperture estimate is interesting in its own right. In order to handle ${\mathcal B}$, we use a mixture of ideas from
recent papers on a simple proof of the $A_2$ conjecture \cite{Ler} and sharp weighted estimates for multilinear Calder\'on-Zygmund operators \cite{DLP}.
In particular, similarly to \cite{Ler}, we obtain the $X^{(2)}$-norm boundedness of ${\mathcal B}$ by ${\mathcal A}$ on an arbitrary Banach function space $X$.

The paper is organized as follows. Next section contains some preliminary information. In Section 3, we obtain the main estimate, namely, the local mean oscillation
estimate of $\widetilde S_{\a,\psi}(f)$. The proof of Theorem~\ref{mainr} is contained in Section 4. Section 5 contains some concluding remarks concerning the sharp aperture-weighted weak type estimates for $S_{\a,\psi}(f)$.

\section{Preliminaries}
\subsection{A weak type $(1,1)$ estimate for square functions}
It is well known that the operator $S_{\a,\psi}$ is of weak type $(1,1)$. However, we could not
find in the literature the sharp dependence on $\a$ in the corresponding inequality. Hence we give
below an argument based on general square functions.

For a measurable function $F$ on ${\mathbb R}^{n+1}_+$ define
$$
S_{\a}(F)(x)=\biggl(\int_{\G_{\a}(x)}|F(y,t)|^2\frac{dydt}{t^{n+1}}\biggr)^{1/2}.
$$

\begin{lemma}\label{weak} For any $\a\ge 1$,
\begin{equation}\label{angle}
\|S_{\a}(F)\|_{L^{1,\infty}}\le c_n\a^n\|S_{1}(F)\|_{L^{1,\infty}}.
\end{equation}
\end{lemma}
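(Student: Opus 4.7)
The plan is to exploit the exact identity
$$
\|S_\a(G)\|_{L^2}^2=v_n\a^n\iint|G(y,t)|^2\,dy\,dt/t=\a^n\|S_1(G)\|_{L^2}^2,
$$
immediate from Fubini, combined with a Calder\'on--Zygmund-style splitting of $F$ driven by the level sets of $S_1(F)$. Fix $\la>0$ and set $K=\|S_1(F)\|_{L^{1,\infty}}$, $\O_\la=\{S_1(F)>\la\}$ (so $|\O_\la|\le K/\la$). Enlarge to $\O_\la^*=\{M\chi_{\O_\la}>1/(C_0\a^n)\}$ with $C_0=2\cdot 3^n$; the weak type $(1,1)$ of the Hardy--Littlewood maximal operator gives $|\O_\la^*|\lesssim\a^n K/\la$.

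Next decompose $F=F_1+F_2$ with $F_1=F\chi_{\widehat{\O_\la^*}}$, where the tent is
$$
\widehat{\O_\la^*}=\{(y,t)\in{\mathbb R}^{n+1}_+:B(y,2\a t)\subset\O_\la^*\}.
$$
For $x\notin\O_\la^*$, any $(y,t)\in\G_\a(x)\cap\widehat{\O_\la^*}$ would force $x\in B(y,\a t)\subset B(y,2\a t)\subset \O_\la^*$, a contradiction. Hence $S_\a(F_1)(x)=0$ off $\O_\la^*$, so it remains to estimate $|\{S_\a(F_2)>\la\}|$, which I would handle via $\|S_\a(F_2)\|_{L^2}^2=v_n\a^n\iint_{\widehat{\O_\la^*}^c}|F|^2\,dy\,dt/t$ and Chebyshev.

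For each $(y,t)\in\widehat{\O_\la^*}^c$ select $z_0\in B(y,2\a t)\setminus\O_\la^*$; since $\a\ge1$ we have $B(y,t)\subset B(z_0,3\a t)$, and $M\chi_{\O_\la}(z_0)\le 1/(C_0\a^n)$ with $C_0=2\cdot 3^n$ forces $|B(y,t)\cap\O_\la|\le|B(y,t)|/2$. So $1/t\le 2|B(y,t)\setminus\O_\la|/(v_n t^{n+1})$ on $\widehat{\O_\la^*}^c$, and substituting followed by swapping the order of integration yields
$$
\iint_{\widehat{\O_\la^*}^c}\frac{|F|^2}{t}\,dy\,dt\le\frac{2}{v_n}\int_{\O_\la^c}S_1(F)(z)^2\,dz\le\frac{4K\la}{v_n},
$$
where the last inequality is the layer-cake bound $\int_{\O_\la^c}S_1(F)^2=2\int_0^\la\tau|\{\tau<S_1(F)\le\la\}|\,d\tau\le 2K\la$. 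Chebyshev then gives $|\{S_\a(F_2)>\la\}|\lesssim \a^n K/\la$, which together with the bound on $|\O_\la^*|$ proves the lemma. The main technical point will be calibrating the tent dilation $2\a$ (so that $S_\a(F_1)$ vanishes off $\O_\la^*$) simultaneously with the density constant $C_0$ (so that the half-density condition $|B(y,t)\cap\O_\la|\le|B(y,t)|/2$ holds on the complement of the tent); once those are fixed, everything reduces to Fubini, Chebyshev, and the layer-cake identity.
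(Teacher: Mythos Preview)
Your argument is correct and is essentially the same as the paper's proof: both enlarge the level set $\Omega_\lambda=\{S_1(F)>\lambda\}$ to $\{M\chi_{\Omega_\lambda}>c/\alpha^n\}$, bound the enlarged set by the weak $(1,1)$ of $M$, control the remaining piece via Chebyshev and the $L^2$ identity for $S_\alpha$, and finish with the layer-cake bound $\int_{\Omega_\lambda^c}S_1(F)^2\le 2\lambda\|S_1(F)\|_{L^{1,\infty}}$. The only difference is packaging: the paper quotes the inequality $\int_{U^c}S_\alpha(F)^2\le 2\alpha^n\int_{\Omega^c}S_1(F)^2$ from Torchinsky, whereas you reprove it via the tent splitting $F=F\chi_{\widehat{\Omega_\lambda^*}}+F\chi_{\widehat{\Omega_\lambda^*}^c}$ and the half-density argument---which is exactly how that inequality is established.
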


\begin{proof}
We will use the following estimate which can be found in \cite[p.~315]{T}:
if $\O\subset {\mathbb R}^n$ is an open set and $U=\{x\in {\mathbb R}^n:M\chi_{\O}(x)>1/2\a^n\},$
where $M$ is the Hardy-Littlewood maximal operator, then
$$\int_{{\mathbb R}^n\setminus U}S_{\a}(F)(x)^2dx\le 2\a^n\int_{{\mathbb R}^n\setminus \O}S_{1}(F)(x)^2dx$$
(observe that the definitions of $S_{\a}(F)$ here and in \cite{T} are differ by the factor $\a^{n/2}$.)

Let $\O_{\xi}=\{x:S_{1}(F)(x)>\xi\}$. Using the weak type $(1,1)$ of $M$, Chebyshev's inequality and the above estimate, we obtain
\begin{eqnarray*}
&&|\{x\in {\mathbb R}^n:S_{\a}(F)(x)>\xi\}|\\
&&\le |U_{\xi}|+|\{x\in {\mathbb R}^n\setminus U_{\xi}:S_{\a}(F)(x)>\xi\}|\\
&&\le c_n\a^n|\{x:S_{1}(F)(x)>\xi\}|+\frac{1}{\xi^2}\int_{{\mathbb R}^n\setminus U_{\xi}}S_{\a}(F)(x)^2dx\\
&&\le c_n\a^n|\{x:S_{1}(F)(x)>\xi\}|+\frac{2\a^n}{\xi^2}\int_{{\mathbb R}^n\setminus \O_{\xi}}S_{1}(F)(x)^2dx.
\end{eqnarray*}
Further,
$$
\int_{{\mathbb R}^n\setminus \O_{\xi}}S_{1}(F)(x)^2dx\le 2\int_{0}^{\xi}\la|\{x:S_{1}(F)(x)>\la\}|d\la
\le 2\xi\|S_{1}(F)\|_{L^{1,\infty}}.
$$
Combining this with the previous estimate gives
$$|\{x:S_{\a}(F)(x)>\xi\}|\le c_n\a^n|\{x:S_{1}(F)(x)>\xi\}|+\frac{4\a^n}{\xi}\|S_{1}(F)\|_{L^{1,\infty}},$$
which proves (\ref{angle}).
\end{proof}

Note that the sharp strong $L^p$ estimates related square functions of different apertures were obtained recently in \cite{A}.

By Lemma \ref{weak} and by the weak type $(1,1)$ of $S_{\psi}(f)$ \cite{GCR},
\begin{equation}\label{weakt}
\|S_{\a,\psi}(f)\|_{L^{1,\infty}}\le c_{n,\psi}\a^{n}\|f\|_{L^1}.
\end{equation}

\subsection{Dyadic grids and sparse families}
Recall that the standard dyadic grid in ${\mathbb R}^n$ consists of the cubes
$$2^{-k}([0,1)^n+j),\quad k\in{\mathbb Z}, j\in{\mathbb Z}^n.$$
Denote the standard grid by ${\mathcal D}$.

By a {\it general dyadic grid} ${\mathscr{D}}$ we mean a collection of
cubes with the following properties: (i)
for any $Q\in {\mathscr{D}}$ its sidelength $\ell_Q$ is of the form
$2^k, k\in {\mathbb Z}$; (ii) $Q\cap R\in\{Q,R,\emptyset\}$ for any $Q,R\in {\mathscr{D}}$;
(iii) the cubes of a fixed sidelength $2^k$ form a partition of ${\mathbb
R}^n$.

Given a cube $Q_0$, denote by ${\mathcal D}(Q_0)$ the set of all
dyadic cubes with respect to $Q_0$, that is, the cubes from ${\mathcal D}(Q_0)$ are formed
by repeated subdivision of $Q_0$ and each of its descendants into $2^n$ congruent subcubes.
Observe that if $Q_0\in {\mathscr{D}}$, then each cube from ${\mathcal D}(Q_0)$ will also
belong to ${\mathscr{D}}$.

We will use the following proposition from \cite{HP}.

\begin{prop}\label{prhp} There are $2^n$ dyadic grids ${\mathscr{D}}_{i}$ such that for any cube $Q\subset {\mathbb R}^n$ there exists a cube $Q_{i}\in {\mathscr{D}}_{i}$
such that $Q\subset Q_{i}$ and $\ell_{Q_{i}}\le 6\ell_Q$.
\end{prop}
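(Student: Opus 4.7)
The plan is to construct the $2^n$ grids as translates of the standard dyadic grid ${\mathcal D}$ by carefully chosen vectors that encode, coordinate by coordinate, one of two possible shifts. Specifically, for each $\omega=(\omega_1,\dots,\omega_n)\in\{0,1/3\}^n$ I would define
$$\mathscr{D}_{\omega}=\bigl\{2^{-k}\bigl([0,1)^n+j+\omega\bigr)\,:\,k\in\mathbb{Z},\,j\in\mathbb{Z}^n\bigr\},$$
and check quickly that each $\mathscr{D}_\omega$ satisfies the three axioms (i)--(iii) of a general dyadic grid, since these properties are preserved under translation.

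Next I would reduce to the one-dimensional situation by the usual tensor argument: a cube $Q=I_1\times\cdots\times I_n$ is contained in $J_1\times\cdots\times J_n\in\mathscr{D}_\omega$ of sidelength $2^k$ exactly when each $I_i$ is contained in the one-dimensional dyadic interval $J_i$ of length $2^k$ in the corresponding shifted grid on $\mathbb{R}$. So it suffices to prove the following one-dimensional claim: given any interval $I\subset\mathbb{R}$, if $k\in\mathbb{Z}$ is chosen so that $3\ell_I\le 2^k<6\ell_I$, then for at least one $\omega_i\in\{0,1/3\}$ the shifted grid contains an interval of length $2^k$ covering $I$.

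For the one-dimensional claim, the key computation is the ``one-third trick''. Given $I=[a,a+\ell_I]$ with $\ell_I\le 2^k/3$, if $I$ is not already contained in a length-$2^k$ interval of the unshifted grid, then $I$ must straddle some boundary $j\,2^k$, and since $\ell_I\le 2^k/3$ the endpoint $a+\ell_I$ still lies to the left of the next shifted boundary $(j+1/3)\,2^k$; similarly $a$ lies to the right of the previous shifted boundary $(j-2/3)\,2^k$. Hence $I$ sits inside a length-$2^k$ interval of the grid shifted by $1/3$. Applying this independently in each coordinate and selecting $\omega_i\in\{0,1/3\}$ accordingly produces the desired $\omega$ and the cube $Q_i\in\mathscr{D}_\omega$ of sidelength $2^k\le 6\ell_Q$ containing $Q$.

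I do not anticipate a serious obstacle here: the argument is essentially the standard ``$1/3$-trick.'' The only small point that needs care is verifying that the translates $\mathscr{D}_\omega$ genuinely satisfy the dyadic axioms (the nesting property (ii) requires that the translate is compatible across scales, which is built into the definition by letting $\omega$ play the role of a scale-independent shift $2^{-k}\omega$ at level $k$), and that the choice of $k$ satisfies both $\ell_I\le 2^k/3$ (for the one-third trick to apply) and $2^k\le 6\ell_Q$ (for the stated bound on $\ell_{Q_i}$); both follow from the dyadic choice $3\ell_I\le 2^k<6\ell_I$.
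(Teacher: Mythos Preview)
The paper does not give its own proof of this proposition; it simply quotes it from Hyt\"onen--P\'erez \cite{HP}. Your overall strategy---the ``one-third trick'' applied coordinate by coordinate---is indeed the standard argument behind that result, and your one-dimensional covering computation is correct.

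However, there is a genuine gap in your construction of the grids. The families
\[
\mathscr{D}_{\omega}=\bigl\{2^{-k}\bigl([0,1)^n+j+\omega\bigr):k\in\mathbb{Z},\ j\in\mathbb{Z}^n\bigr\},\qquad \omega\in\{0,1/3\}^n,
\]
are \emph{not} dyadic grids when $\omega\neq 0$: the nesting axiom (ii) fails. In one dimension with $\omega=1/3$, the level $k=0$ interval $[1/3,4/3)$ straddles the point $2/3$, which is a boundary point of the level $k=-1$ intervals $[2m+2/3,\,2m+8/3)$; hence it is not contained in any of them. Your parenthetical reassurance that ``the translate is compatible across scales'' is exactly where the argument breaks: the shift at scale $2^{-k}$ is $2^{-k}/3$, and doubling $1/3$ gives $2/3\not\equiv 1/3\pmod 1$, so consecutive scales do not match up.

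The standard remedy is to alternate the sign of the shift with the scale,
\[
\mathscr{D}_{\omega}=\bigl\{2^{-k}\bigl([0,1)^n+j+(-1)^k\omega\bigr):k\in\mathbb{Z},\ j\in\mathbb{Z}^n\bigr\},
\]
which \emph{does} nest (since $-2/3\equiv 1/3\pmod 1$). With this correction the rest of your argument goes through: at the chosen scale the boundaries of the shifted grid sit at $(m\pm 1/3)2^k$, and your computation still shows that an interval of length at most $2^k/3$ straddling a standard dyadic endpoint is contained in one interval of the shifted grid. This is a small but essential fix, because the paper genuinely needs the $\mathscr{D}_i$ to be dyadic grids (sparse families in $\mathscr{D}_i$ are used in Lemma~\ref{mult} and Lemma~\ref{estim}).
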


We say that $\{Q_j^k\}$ is a {\it sparse family} of cubes if:
(i)~the cubes $Q_j^k$ are disjoint in $j$, with $k$ fixed;
(ii) if $\Omega_k=\cup_jQ_j^k$, then $\Omega_{k+1}\subset~\Omega_k$;
(iii) $|\Omega_{k+1}\cap Q_j^k|\le \frac{1}{2}|Q_j^k|$.

\subsection{A ``local mean oscillation decomposition"}
The non-increasing rearrangement of a measurable function $f$ on ${\mathbb R}^n$ is defined by
$$f^*(t)=\inf\{\a>0:|\{x\in {\mathbb R}^n:|f(x)|<\a\}|<t\}\quad(0<t<\infty).$$

Given a measurable function $f$ on ${\mathbb R}^n$ and a cube $Q$,
the local mean oscillation of $f$ on $Q$ is defined by
$$\o_{\la}(f;Q)=\inf_{c\in {\mathbb R}}
\big((f-c)\chi_{Q}\big)^*\big(\la|Q|\big)\quad(0<\la<1).$$

By a median value of $f$ over $Q$ we mean a possibly nonunique, real
number $m_f(Q)$ such that
$$\max\big(|\{x\in Q: f(x)>m_f(Q)\}|,|\{x\in Q: f(x)<m_f(Q)\}|\big)\le |Q|/2.$$

It is easy to see that the set of all median values of $f$ is either one point or the closed interval. In the latter case we will assume for
the definiteness that $m_f(Q)$ is the {\it maximal} median value. Observe that it follows from the definitions that
\begin{equation}\label{pro1}
|m_f(Q)|\le (f\chi_Q)^*(|Q|/2).
\end{equation}

Given a cube $Q_0$, the dyadic local sharp maximal
function $m^{\#,d}_{\la;Q_0}f$ is defined by
$$m^{\#,d}_{\la;Q_0}f(x)=\sup_{x\in Q'\in
{\mathcal D}(Q_0)}\o_{\la}(f;Q').$$

The following theorem was proved in \cite{Ler1} (its very similar version can be found in \cite{L1}).

\begin{theorem}\label{decom1} Let $f$ be a measurable function on
${\mathbb R}^n$ and let $Q_0$ be a fixed cube. Then there exists a
(possibly empty) sparse family of cubes $Q_j^k\in {\mathcal D}(Q_0)$
such that for a.e. $x\in Q_0$,
$$
|f(x)-m_f(Q_0)|\le
4m_{\frac{1}{2^{n+2}};Q_0}^{\#,d}f(x)+2\sum_{k,j}
\o_{\frac{1}{2^{n+2}}}(f;Q_j^k)\chi_{Q_j^k}(x).
$$
\end{theorem}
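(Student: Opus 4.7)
The plan is a Calder\'on--Zygmund-type stopping-time construction of the family $\{Q_j^k\}$, followed by a telescoping decomposition of $f-m_f(Q_0)$ along the resulting nested chain. With $\la=1/2^{n+2}$, the engine of the argument is the distribution bound
$$|\{x\in Q:|f(x)-m_f(Q)|>2\o_\la(f;Q)\}|\le \la|Q|,$$
valid for every cube $Q$. To prove it I pick $c$ that almost realizes the infimum defining $\o_\la(f;Q)$; since at least $(1-\la)|Q|>|Q|/2$ of $Q$ satisfies $|f-c|\le \o_\la(f;Q)$, the median $m_f(Q)$ must lie within $\o_\la(f;Q)$ of $c$, and the triangle inequality delivers the claim.

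Next I construct the cubes level by level. Set $\O_0:=Q_0$, without including $Q_0$ in the family. Given the level-$k$ cubes $\{Q_j^k\}_j$ (interpreted as $\{Q_0\}$ when $k=0$), for each $j$ take $\{Q_l^{k+1}\}_l$ to be the maximal dyadic subcubes of $Q_j^k$ satisfying $|E_{Q_j^k}\cap Q_l^{k+1}|>2^{-(n+1)}|Q_l^{k+1}|$, where $E_Q:=\{x\in Q:|f(x)-m_f(Q)|>2\o_\la(f;Q)\}$. Calder\'on--Zygmund maximality yields the companion upper bound $|E_{Q_j^k}\cap Q_l^{k+1}|\le |Q_l^{k+1}|/2$. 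Coupled with the distribution estimate, this produces sparsity,
$$|\O_{k+1}\cap Q_j^k|\le 2^{n+1}|E_{Q_j^k}\cap Q_j^k|\le 2^{n+1}\la|Q_j^k|=|Q_j^k|/2,$$
and, via the maximal-median convention from (\ref{pro1}), the crucial median-comparison estimate
$$|m_f(Q_l^{k+1})-m_f(Q_j^k)|\le 2\o_\la(f;Q_j^k),$$
since at least half of $Q_l^{k+1}$ then has $f\in[m_f(Q_j^k)-2\o_\la,m_f(Q_j^k)+2\o_\la]$, which pins $m_f(Q_l^{k+1})$ into that band.

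Finally, I telescope. Sparsity gives $|\O_{k+1}|\le|\O_k|/2$, so $\bigcap_k\O_k$ is null; for a.e.\ $x\in Q_0$ there is a finite $k^*\ge 0$ with $x\in Q_{j(k^*)}^{k^*}\setminus\O_{k^*+1}$, tracing a unique chain $Q_0=:Q_{j(0)}^0\supset Q_{j(1)}^1\supset\cdots\supset Q_{j(k^*)}^{k^*}$. Lebesgue differentiation places a.e.\ such $x$ outside $E_{Q_{j(k^*)}^{k^*}}$, so $|f(x)-m_f(Q_{j(k^*)}^{k^*})|\le 2\o_\la(f;Q_{j(k^*)}^{k^*})$. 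Combined with the median comparison at each link of the chain,
$$|f(x)-m_f(Q_0)|\le 2\o_\la(f;Q_0)+2\o_\la(f;Q_{j(k^*)}^{k^*})+2\sum_{i=1}^{k^*-1}\o_\la(f;Q_{j(i)}^i).$$
Both $Q_0$ and $Q_{j(k^*)}^{k^*}$ are dyadic cubes in $\mathcal{D}(Q_0)$ containing $x$, so each of their $\o_\la$-values is at most $m^{\#,d}_{\la;Q_0}f(x)$, accounting for the coefficient $4$; the remaining terms, indexed by cubes of the family containing $x$, are dominated by $2\sum_{k,j}\o_\la(f;Q_j^k)\chi_{Q_j^k}(x)$.

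The main technical obstacle is the median-comparison inequality itself: because medians may fail to be unique and the density bound $|E_{Q_j^k}\cap Q_l^{k+1}|\le|Q_l^{k+1}|/2$ is not strict, ruling out the edge case in which $m_f(Q_l^{k+1})$ sits just outside the band $[m_f(Q_j^k)-2\o_\la,m_f(Q_j^k)+2\o_\la]$ will require a careful distribution-function argument exploiting the maximal-median convention. Everything else is routine Calder\'on--Zygmund bookkeeping.
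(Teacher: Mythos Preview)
The paper does not supply its own proof of this theorem; it is quoted from \cite{Ler1} (with the closely related version in \cite{L1}). Your proposal reproduces exactly the argument given in those references: the stopping-time selection of maximal dyadic subcubes on which the set $E_Q=\{x\in Q:|f(x)-m_f(Q)|>2\o_\la(f;Q)\}$ has density exceeding a fixed threshold, the sparseness bound coming from $|E_Q|\le\la|Q|$, the median comparison obtained from the parent-cube density control, and the telescoping along the nested chain of selected cubes. The edge case you flag in the median comparison is genuine and is resolved in the cited papers precisely through the maximal-median convention, along the lines you indicate. So your approach coincides with the original proof that the paper is citing.
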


\section{A key estimate}
In this section we will obtain the main local mean oscillation estimate of $S_{\a,\psi}$.
We consider a smooth version of $S_{\a,\psi}$ defined as follows. Let $\Phi$ be a Schwartz function such that
$$\chi_{B(0,1)}(x)\le \Phi(x)\le \chi_{B(0,2)}(x).$$
Define
$$
\widetilde S_{\a,\psi}(f)(x)=\left(\iint_{{\mathbb R}^{n+1}_+}\Phi\Big(\frac{x-y}{t\a}\Big)|f*\psi_t(y)|^2\frac{dydt}{t^{n+1}}\right)^{1/2}\quad(\a>0).
$$
It is easy to see that
$$
S_{\a,\psi}(f)(x)\le \widetilde S_{\a,\psi}(f)(x)\le
S_{2\a,\psi}(f)(x).$$
Hence, by (\ref{weakt}),
\begin{equation}\label{weak1}
\|\widetilde S_{\a,\psi}(f)\|_{L^{1,\infty}}\le c_{n,\psi}\a^{n}\|f\|_{L^1}.
\end{equation}

\begin{lemma}\label{oscg} For any cube $Q\subset {\mathbb R}^n$,
\begin{equation}\label{olan}
\o_{\la}(\widetilde S_{\a,\psi}(f)^2;Q)\le
c_{n,\la,\psi}\a^{2n}\sum_{k=0}^{\infty}\frac{1}{2^{k\d}}\left(\frac{1}{|2^kQ|}\int_{2^kQ}|f|\right)^2,
\end{equation}
where $\d=\e$ from condition (\ref{cond}) if $\e<1$, and $\d<1$ if $\e=1$.
\end{lemma}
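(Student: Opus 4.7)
I follow the local mean oscillation strategy of Lerner, adapted to the quadratic structure of $\widetilde S_{\a,\psi}$. Fix $Q$, write $\ell = \ell(Q)$, and decompose $f = f_1 + f_2$ with $f_1 = f\chi_{3Q}$. Expanding
$$\widetilde S_{\a,\psi}^2(f)(x) = \widetilde S_{\a,\psi}^2(f_1)(x) + \widetilde S_{\a,\psi}^2(f_2)(x) + 2\,\mathrm{Re}\iint\Phi\Big(\tfrac{x-y}{t\a}\Big)\overline{f_1*\psi_t(y)}f_2*\psi_t(y)\,\frac{dy\,dt}{t^{n+1}},$$
I would pick the centering constant $c_Q$ to be the sum of the last two terms at a point $x_Q\in Q$ chosen so that $\widetilde S_{\a,\psi}^2(f_1)(x_Q) \le c_{\la}\a^{2n}(f_{3Q})^2$ (possible by the weak-type bound \eqref{weak1}). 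Then $\widetilde S_{\a,\psi}^2(f)(x) - c_Q = \widetilde S_{\a,\psi}^2(f_1)(x) + \Delta_1(x) + \Delta_2(x)$, where $\Delta_1$ is the oscillation of $\widetilde S_{\a,\psi}^2(f_2)$ and $\Delta_2$ that of the cross term.

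The local piece $\widetilde S_{\a,\psi}^2(f_1)(x)$ is handled by \eqref{weak1} together with $(F^2)^*=(F^*)^2$: off a subset of $Q$ of measure less than $\la|Q|/6$, it is bounded by $c\a^{2n}(f_{3Q})^2$, giving the $k=O(1)$ term. The heart of the matter is the pointwise tail estimate
$$|\Delta_1(x)|\le\iint\Big|\Phi\Big(\tfrac{x-y}{t\a}\Big)-\Phi\Big(\tfrac{x_Q-y}{t\a}\Big)\Big|\,|f_2*\psi_t(y)|^2\,\frac{dy\,dt}{t^{n+1}} \le c\a^{2n}\sum_{k\ge 2}\frac{(f_{2^k Q})^2}{2^{k\d}}.$$
I would split the $t$-integral at $t=\ell/\a$, using smoothness of $\Phi$ (giving a factor $\ell/(t\a)$) for $t\ge\ell/\a$ and the crude bound with $y$ localized to a bounded dilate of $Q$ for $t<\ell/\a$. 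Decomposing $f_2=\sum_{k\ge 2}f\chi_{A_k}$ with $A_k=2^{k+1}Q\setminus 2^k Q$, bounding $|f\chi_{A_k}*\psi_t(y)|$ via \eqref{cond} when $t\a\le c\,2^k\ell$ and via $\|\psi_t\|_\infty\lesssim t^{-n}$ otherwise, and applying the weighted Cauchy--Schwarz $(\sum_k a_k)^2\le(\sum_k 2^{-k\d'})(\sum_k 2^{k\d'}a_k^2)$ for some $\d'<2\e$, one obtains the claim after integration in $(y,t)$.

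The cross-term oscillation $\Delta_2$ is controlled by a Cauchy--Schwarz in the measure $|\Phi(\tfrac{x-y}{t\a})-\Phi(\tfrac{x_Q-y}{t\a})|\,dy\,dt/t^{n+1}$, yielding $|\Delta_2(x)|^2\le 4 I_1(x)I_2(x)$. Here $I_1(x)\le\widetilde S_{\a,\psi}^2(f_1)(x)+\widetilde S_{\a,\psi}^2(f_1)(x_Q)\le c\a^{2n}(f_{3Q})^2$ on the good set by the choice of $x_Q$, while $I_2(x)$ is exactly the upper bound just derived for $|\Delta_1(x)|$. An AM--GM then absorbs $|\Delta_2|$ into the same right-hand side.

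The principal obstacle is the book-keeping in $|\Delta_1|$: the two thresholds $t\a\lessgtr\ell$ and $t\a\lessgtr 2^k\ell$ partition $(t,k)$-space into four sub-regimes, and one must verify in each that the $\a$-prefactor does not exceed $\a^{2n}$ and that the $k$-summation is a convergent geometric series with ratio $2^{-\d}$. The borderline $\e=1$ produces a logarithmic divergence in the $t$-integral, which forces the loss $\d<1$ in the statement.
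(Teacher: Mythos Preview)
Your strategy is sound and would prove the lemma, but the paper organizes the computation differently. Instead of splitting $f=f\chi_{3Q}+f\chi_{(3Q)^c}$ and expanding the square (which creates the cross term $\Delta_2$ you then have to handle by Cauchy--Schwarz and AM--GM), the paper splits the \emph{domain of integration}: write $\widetilde S_{\a,\psi}^2(f)(x)=I_1(x)+I_2(x)$ with $I_1$ the integral over the tent $T(2Q)=\{(y,t):y\in 2Q,\ 0<t<2\ell_Q\}$ and $I_2$ the integral over its complement. The rearrangement $(I_1\chi_Q)^*(\la|Q|)$ is controlled by splitting $f$ at $4Q$, using the weak-type bound for the near part and kernel decay plus Chebyshev for the far part; here the elementary $(a+b)^2\le 2(a^2+b^2)$ replaces your cross-term machinery. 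The pointwise oscillation $|I_2(x)-I_2(x_0)|$ is bounded by decomposing $\mathbb{R}^{n+1}_+\setminus T(2Q)$ into dyadic tent annuli $T(2^{k+1}Q)\setminus T(2^kQ)$; on the $k$-th annulus the support of $\Phi(\tfrac{x-y}{t\a})-\Phi(\tfrac{x_0-y}{t\a})$ forces $t\gtrsim 2^k\ell_Q/\a$, and one then splits $f$ at the \emph{scale-adapted} threshold $2^{k+2}Q$, so each resulting piece carries a single length scale. This adapted splitting is precisely what eliminates your cross term and collapses your four sub-regimes into two single-scale integrals $J_1,J_2$ per annulus (with $J_1$ handled via $\|\psi_t\|_{\infty}$ and Minkowski, and $J_2$ via kernel decay). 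Your approach, with the splitting of $f$ fixed once at $3Q$, is closer to the classical Calder\'on--Zygmund template and works equally well, at the cost of the extra $\Delta_2$ step and the weighted $\ell^2$-Cauchy--Schwarz over the annular index~$k$; in fact your route yields any $\d<\min(2\e,1)$, which recovers and slightly sharpens the stated exponent.
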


\begin{proof} Given a cube $Q$, let $T(Q)=\{(y,t):y\in Q,
0<t<{\ell}_Q\},$ where ${\ell}_Q$ denotes the side length of $Q$.
For $x\in Q$ we decompose
$\widetilde S_{\a,\psi}(f)(x)^2$ into the sum of
$$
I_1(f)(x)=\iint_{T(2Q)}
\Phi\Big(\frac{x-y}{t\a}\Big)|f*\psi_t(y)|^2\frac{dydt}{t^{n+1}}
$$
and
$$
I_2(f)(x)=\iint_{{\mathbb R}^{n+1}_+\setminus T(2Q)
}\Phi\Big(\frac{x-y}{t\a}\Big)|f*\psi_t(y)|^2\frac{dydt}{t^{n+1}}.
$$

Let us show first that
\begin{equation}\label{1cl}
(I_1(f)\chi_Q)^*(\la|Q|)\le c_{n,\la,\psi}\a^{2n}\sum_{k=0}^{\infty}\frac{1}{2^{k\e}}\left(\frac{1}{|2^kQ|}\int_{2^kQ}|f|\right)^2.
\end{equation}
Using that $(a+b)^2\le 2(a^2+b^2)$, we get
$$
I_1(f)(x)\le 2\big(I_1(f\chi_{4Q})(x)+I_1(f\chi_{{\mathbb R}^n\setminus 4Q})(x)\big).
$$
Hence,
\begin{eqnarray}
(I_1(f)\chi_Q)^*(\la|Q|)&\le&2\big((I_1(f\chi_{4Q}))^*(\la|Q|/2)\label{reares}\\
&+&(I_1(f\chi_{{\mathbb R}^n\setminus 4Q})\chi_Q)^*(\la|Q|/2)\big).\nonumber
\end{eqnarray}

By (\ref{weak1}),
\begin{eqnarray}
(I_1(f\chi_{4Q}))^*(\la|Q|/2)&\le& (\widetilde S_{\a,\psi}(f\chi_{4Q}))^*(\la|Q|/2)^2\label{weak11}\\
&\le& c_{n,\la,\psi}\a^{2n}\left(\frac{1}{|4Q|}\int_{4Q}|f|\right)^2.\nonumber
\end{eqnarray}
Further, by (\ref{cond}), for $(y,t)\in T(2Q)$,
\begin{eqnarray*}
|(f\chi_{{\mathbb R}^n\setminus 4Q})*\psi_t(y)|&\le& c_{\psi}t^{\e}\int_{{\mathbb R}^n\setminus 4Q}|f(\xi)|\frac{1}{(t+|y-\xi|)^{n+\e}}d\xi\\
&\le& c_{n,\psi}(t/\ell_Q)^{\e}\sum_{k=0}^{\infty}\frac{1}{2^{k\e}}\frac{1}{|2^kQ|}\int_{2^kQ}|f|.
\end{eqnarray*}
Hence, using Chebyshev's inequality and that $\int_{{\mathbb R}^n}\Phi\Big(\frac{x-y}{t\a}\Big)dx\le c_n(t\a)^n$, we have
\begin{eqnarray*}
&&(I_1(f\chi_{{\mathbb R}^n\setminus 4Q})\chi_Q)^*(\la|Q|/2)\\
&&\le \frac{2}{\la|Q|}
\iint_{T(2Q)}\Big(\int_{{\mathbb R}^n}\Phi\Big(\frac{x-y}{t\a}\Big)dx\Big)|(f\chi_{{\mathbb R}^n\setminus 4Q})*\psi_t(y)|^2\frac{dydt}{t^{n+1}}\\
&&\le c_{n,\la,\psi}\a^n(1/\ell_Q)^{2\e}\left(\sum_{k=0}^{\infty}\frac{1}{2^{k\e}}\frac{1}{|2^kQ|}\int_{2^kQ}|f|\right)^2\int_0^{2\ell_Q}t^{2\e-1}dt\\
&&\le c_{n,\la,\psi}\a^n\left(\sum_{k=0}^{\infty}\frac{1}{2^{k\e}}\frac{1}{|2^kQ|}\int_{2^kQ}|f|\right)^2.
\end{eqnarray*}
By H\"older's inequality,
$$
\left(\sum_{k=0}^{\infty}\frac{1}{2^{k\e}}\frac{1}{|2^kQ|}\int_{2^kQ}|f|\right)^2\le
\left(\sum_{k=0}^{\infty}\frac{1}{2^{k\e}}\right)\sum_{k=0}^{\infty}\frac{1}{2^{k\e}}\left(\frac{1}{|2^kQ|}\int_{2^kQ}|f|\right)^2.
$$
Combining this with the previous estimate and with (\ref{weak11}) and (\ref{reares}) proves (\ref{1cl}).

Let $x,x_0\in Q$, and let us estimate now $|I_2(f)(x)-I_2(f)(x_0)|$. We have
\begin{eqnarray*}
&&|I_2(f)(x)-I_2(f)(x_0)|\\
&&\le \sum_{k=1}^{\infty}\iint_{T(2^{k+1}Q)\setminus T(2^kQ)}
\Big|\Phi\Big(\frac{x-y}{t\a}\Big)-\Phi\Big(\frac{x_0-y}{t\a}\Big)\Big||f*\psi_t(y)|^2\frac{dydt}{t^{n+1}}.
\end{eqnarray*}

Suppose $(y,t)\in T(2^{k+1}Q)\setminus T(2^kQ)$. If $y\in 2^kQ$, then $t\ge 2^k\ell_Q$. On the other hand, if
$y\in 2^{k+1}Q\setminus 2^kQ$, then for any $x\in Q$, $|y-x|\ge (2^k-1/2)\ell_Q$. Hence, if $t<\frac{1}{2\a}(2^k-1/2)\ell_Q$,
then $|y-x|/\a t>2$ and $|y-x_0|/\a t>2$, and therefore,
$$\Phi\Big(\frac{x-y}{t\a}\Big)-\Phi\Big(\frac{x_0-y}{t\a}\Big)=0.$$
Using also that
$$\Big|\Phi\Big(\frac{x-y}{t\a}\Big)-\Phi\Big(\frac{x_0-y}{t\a}\Big)\Big|\le \frac{\sqrt n\ell_Q}{\a t}\|\nabla \Phi\|_{L^{\infty}},$$
we get
\begin{eqnarray*}
&&\Big|\Phi\Big(\frac{x-y}{t\a}\Big)-\Phi\Big(\frac{x_0-y}{t\a}\Big)\Big|\chi_{\{T(2^{k+1}Q)\setminus T(2^kQ)\}}(y,t)\\
&&\le c_n\frac{\ell_Q}{\a t}\chi_{\{(y,t):y\in 2^{k+1}Q, 2^{k-2}\ell_Q/\a\le t\le 2^{k+1}\ell_Q}\}(y,t).
\end{eqnarray*}
Hence,
\begin{eqnarray*}
&&\iint_{T(2^{k+1}Q)\setminus T(2^kQ)}
\Big|\Phi\Big(\frac{x-y}{t\a}\Big)-\Phi\Big(\frac{x_0-y}{t\a}\Big)\Big||f*\psi_t(y)|^2\frac{dydt}{t^{n+1}}\\
&&\le c_n\frac{\ell_Q}{\a}\int_{2^{k-2}\ell_Q/\a}^{2^{k+1}\ell_Q}\int_{2^{k+1}Q}|f*\psi_t(y)|^2\frac{dydt}{t^{n+2}}\le c_n(J_1+J_2),
\end{eqnarray*}
where
$$
J_1=\frac{\ell_Q}{\a}\int_{2^{k-2}\ell_Q/\a}^{2^{k+1}\ell_Q}\int_{2^{k+1}Q}
|(f\chi_{2^{k+2}Q})*\psi_t(y)|^2\frac{dydt}{t^{n+2}}
$$
and
$$
J_2=\frac{\ell_Q}{\a}\int_{2^{k-2}\ell_Q/\a}^{2^{k+1}\ell_Q}\int_{2^{k+1}Q}
|(f\chi_{{\mathbb R}^n\setminus2^{k+2}Q})*\psi_t(y)|^2\frac{dydt}{t^{n+2}}.
$$

Let us first estimate $J_1$. Using Minkowski's integral inequality, we obtain
\begin{eqnarray*}
J_1\le \frac{\ell_Q}{\a}\left(\int_{2^{k+2}Q}|f(\xi)|\Big(
\int_{2^{k-2}\ell_Q/\a}^{2^{k+1}\ell_Q}\int_{2^{k+1}Q}\psi_t(y-\xi)^2\frac{dydt}{t^{n+2}}\Big)^{1/2}d\xi\right)^2.
\end{eqnarray*}
Since
$$
\int_{2^{k+1}Q}\psi_t(y-\xi)^2dy\le \frac{\|\psi\|_{L^{\infty}}}{t^n}\|\psi_t\|_{L^1}=
\frac{\|\psi\|_{L^{\infty}}\|\psi\|_{L^1}}{t^n},
$$
we get
\begin{eqnarray*}
J_1&\le& c_{\psi}\frac{\ell_Q}{\a}\Big(\int_{2^{k+2}Q}|f(\xi)|d\xi\Big)^2\int_{2^{k-2}\ell_Q/\a}^{\infty}\frac{dt}{t^{2n+2}}\\
&\le& c_{n,\psi}\a^{2n}2^{-k}\Big(\frac{1}{|2^{k+2}Q|}\int_{2^{k+2}Q}|f(\xi)|d\xi\Big)^2.
\end{eqnarray*}

We turn to the estimate of $J_2$. By (\ref{cond}), for $(y,t)\in T(2^{k+1}Q)$,
\begin{eqnarray*}
|(f\chi_{{\mathbb R}^n\setminus 2^{k+2}Q})*\psi_t(y)|&\le& c_{\psi}t^{\e}\int_{{\mathbb R}^n\setminus 2^{k+2}Q}|f(\xi)|\frac{1}{(t+|y-\xi|)^{n+\e}}d\xi\\
&\le& c_{n,\psi}(t/\ell_Q)^{\e}\sum_{i=k}^{\infty}\frac{1}{2^{i\e}}\frac{1}{|2^iQ|}\int_{2^iQ}|f|.
\end{eqnarray*}
Therefore,
\begin{eqnarray*}
J_2&\le& c_{n,\psi}\frac{\ell_Q}{\a}\Big(\sum_{i=k}^{\infty}\frac{1}{2^{i\e}}\frac{1}{|2^iQ|}\int_{2^iQ}|f|\Big)^2\frac{1}{\ell_Q^{2\e}}
\int_{2^{k-2}\ell_Q/\a}^{2^{k+1}\ell_Q}\int_{2^{k+1}Q}\frac{dydt}{t^{n+2-2\e}}\\
&\le& c_{n,\psi}\a^{n-2\e}2^{(2\e-1)k}\Big(\sum_{i=k}^{\infty}\frac{1}{2^{i\e}}\frac{1}{|2^iQ|}\int_{2^iQ}|f|\Big)^2.
\end{eqnarray*}

Combining the estimates for $J_1$ and $J_2$, we obtain
\begin{eqnarray*}
|I_2(f)(x)-I_2(f)(x_0)|&\le& c_{n,\psi}\a^{2n}\sum_{k=1}^{\infty}\frac{1}{2^k}\Big(\frac{1}{|2^{k}Q|}\int_{2^{k}Q}|f(\xi)|d\xi\Big)^2\\
&+& c_{n,\psi}\a^{n-2\e}\sum_{k=1}^{\infty}\frac{2^{2\e k}}{2^k}\Big(\sum_{i=k}^{\infty}\frac{1}{2^{i\e}}\frac{1}{|2^iQ|}\int_{2^iQ}|f|\Big)^2.
\end{eqnarray*}

By H\"older's inequality,
\begin{eqnarray*}
&&\sum_{k=1}^{\infty}\frac{2^{2\e k}}{2^k}\left(\sum_{i=k}^{\infty}\frac{1}{2^{i\e}}\frac{1}{|2^iQ|}\int_{2^iQ}|f|\right)^2\\
&&\le c_{\e}\sum_{k=1}^{\infty}\frac{2^{\e k}}{2^k}\sum_{i=k}^{\infty}\frac{1}{2^{i\e}}\left(\frac{1}{|2^iQ|}\int_{2^iQ}|f|\right)^2\\
&&\le c_{\e}\sum_{k=1}^{\infty}\ga(k,\e)\left(\frac{1}{|2^kQ|}\int_{2^kQ}|f|\right)^2,
\end{eqnarray*}
where
$$
\ga(k,\e)=\begin{cases} \frac{1}{2^{\e k}}, & \e<1\\
\frac{k}{2^k}, & \e=1.\end{cases}
$$
Therefore,
$$
|I_2(f)(x)-I_2(f)(x_0)|\le c_{n,\psi}\a^{2n}\sum_{k=1}^{\infty}\ga(k,\e)\left(\frac{1}{|2^kQ|}\int_{2^kQ}|f|\right)^2.
$$
From this and from (\ref{1cl}),
\begin{eqnarray*}
\o_{\la}(\widetilde S_{\a,\psi}(f)^2;Q)&\le& (I_1(f)\chi_Q)^*(\la|Q|)+\|I_2(f)-I_2(f)(x_0)\|_{L^{\infty}(Q)}\\
&\le& c_{n,\la,\psi}\a^{2n}\sum_{k=0}^{\infty}\ga(k,\e)\left(\frac{1}{|2^kQ|}\int_{2^kQ}|f|\right)^2,
\end{eqnarray*}
which completes the proof.
\end{proof}

\section{Proof of Theorem \ref{mainr}}
\subsection{Several auxiliary operators}
Given a sparse family ${\mathcal S}=\{Q_j^k\}\in {\mathscr{D}}$, define
$${\mathcal T}^{\mathcal S}_{2,m}f(x)=\left(\sum_{j,k}(f_{2^mQ_j^k})^2\chi_{Q_j^k}(x)\right)^{1/2}.$$

In the case when $m=0$, the following result was proved in \cite{CMP}.

\begin{lemma}\label{norms} For any $1<p<\infty$,
$$
\|{\mathcal T}^{\mathcal S}_{2,0}\|_{L^p(w)}\le c_{n,p}[w]_{A_p}^{\max(\frac{1}{2},\frac{1}{p-1})}.
$$
\end{lemma}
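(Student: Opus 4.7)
The lemma is a restatement of the $m=0$ case from \cite{CMP}, and the plan is to follow their strategy: establish a single endpoint $L^{p_0}(w)$ estimate with the right exponent, then invoke sharp Rubio de Francia extrapolation. The crucial observation is that the endpoint cannot be $p_0=2$. Sharp extrapolation (Dragi\v{c}evi\'c--Grafakos--Pereyra--Petermichl) turns an $L^{p_0}(w)$ bound with constant $[w]_{A_{p_0}}^{\alpha}$ into an $L^p(w)$ bound with constant $[w]_{A_p}^{\alpha\max(1,(p_0-1)/(p-1))}$, and imposing the target exponent $\max(\tfrac12,\tfrac1{p-1})$ forces $(p_0,\alpha)=(3,\tfrac12)$, since with this choice
\begin{equation*}
\tfrac12\max\bigl(1,\tfrac{2}{p-1}\bigr)=\max\bigl(\tfrac12,\tfrac1{p-1}\bigr).
\end{equation*}
So the whole proof reduces to establishing
\begin{equation*}
\|{\mathcal T}^{\mathcal S}_{2,0}f\|_{L^3(w)}\le c_n[w]_{A_3}^{1/2}\|f\|_{L^3(w)}.
\end{equation*}

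For this endpoint estimate I would square and dualize via the $L^{3/2}(w)$--$L^3(w)$ pairing, reducing to the task of bounding, uniformly over $h\ge 0$ with $\|h\|_{L^3(w)}=1$, the bilinear sum
\begin{equation*}
I(f,h)=\sum_{j,k}(f_{Q_j^k})^2\int_{Q_j^k}hw.
\end{equation*}
Each summand is $\langle f\rangle_Q^2\langle hw\rangle_Q|Q|$; using sparseness to substitute $|Q|\le 2|E_Q|$ and the pairwise disjointness of $\{E_{Q_j^k}\}$ together with the pointwise bounds $\langle f\rangle_Q\le Mf(x)$ and $\langle hw\rangle_Q\le M(hw)(x)$ valid for $x\in E_Q$, one arrives at $I(f,h)\le 2\int(Mf)^2 M(hw)\,dx$. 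A H\"older split, chosen so that the $A_3$ condition $\langle w\rangle_Q\langle w^{-1/2}\rangle_Q^{2}\le[w]_{A_3}$ is invoked exactly once and symmetrically between the $Mf$ and $M(hw)$ factors, together with Buckley's sharp bound for $M$, should produce $I(f,h)\lesssim [w]_{A_3}\|f\|_{L^3(w)}^2\|h\|_{L^3(w)}$. Taking square roots yields the desired endpoint.

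The main obstacle will be the last step: the powers of $[w]_{A_3}$ must be bookkept so that Cauchy--Schwarz, sparseness, and maximal-function estimates collectively consume $[w]_{A_3}$ exactly once. Applying Buckley's bound naively to each maximal factor would already spend $[w]_{A_3}^{1/2}$ twice, producing the wrong overall exponent. A cleaner route, which I expect to follow \cite{CMP} in, is to recast the problem in Sawyer's two-weight testing form, verifying $\|{\mathcal T}^{\mathcal S}_{2,0}(\sigma\chi_{Q})\|_{L^3(w)}\lesssim[w]_{A_3}^{1/2}\sigma(Q)^{1/3}$ with $\sigma=w^{-1/2}$ (and its dual); in this formulation the roles of $f$ and $h$ become symmetric and the $A_3$ factor is visibly consumed only once. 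Once the endpoint is in hand, the extrapolation step is a black-box application and the lemma follows.
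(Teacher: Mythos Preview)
The paper does not prove this lemma; it is quoted directly from \cite{CMP}. Your overall plan---prove the $L^3(w)$ endpoint with constant $[w]_{A_3}^{1/2}$ and then apply sharp extrapolation---is precisely the \cite{CMP} strategy, and your identification of $(p_0,\alpha)=(3,\tfrac12)$ is correct.

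There is, however, a genuine gap in your first route to the endpoint. Once you pass to $I(f,h)\le 2\int (Mf)^2 M(hw)\,dx$ with the \emph{unweighted} maximal function, the game is already lost: no choice of H\"older split at that stage can recover the sharp power, because converting each factor back to a weighted norm forces Buckley's bound and spends the $A_3$ constant twice over, exactly as you suspected. The remedy---and this is what \cite{CMP} actually does---is to extract $[w]_{A_3}$ \emph{before} any maximal function appears. With $\sigma=w^{-1/2}$ one writes $\langle f\rangle_Q=\langle\sigma\rangle_Q\,\langle f\sigma^{-1}\rangle_Q^{\sigma}$ and $\int_Q hw=w(Q)\,\langle h\rangle_Q^w$, so each summand factors as
\[
\langle f\rangle_Q^2\int_Q hw=\bigl(\langle\sigma\rangle_Q^2\langle w\rangle_Q\bigr)\,|Q|\,\bigl(\langle f\sigma^{-1}\rangle_Q^{\sigma}\bigr)^2\langle h\rangle_Q^w\le[w]_{A_3}\,|Q|\,\bigl(\langle f\sigma^{-1}\rangle_Q^{\sigma}\bigr)^2\langle h\rangle_Q^w.
\]
Sparseness then replaces $|Q|$ by $2|E_Q|$, the weighted averages are dominated pointwise on $E_Q$ by the \emph{weighted} maximal functions $M_\sigma$ and $M_w$, and H\"older (using $\sigma^{-2}=w$) reduces everything to $\|M_\sigma\|_{L^3(\sigma)\to L^3(\sigma)}$ and $\|M_w\|_{L^3(w)\to L^3(w)}$, both bounded by constants independent of $[w]_{A_3}$. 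Your Sawyer-testing alternative would also reach the goal, but it is heavier machinery than this direct argument requires.
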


Given a sparse family ${\mathcal S}=\{Q_j^k\}\in {\mathcal D}$, define
$$
{\mathscr{M}}_{m}^{\mathcal S}(f,g)(x)=\sum_{j,k}(f_{2^mQ_{j,k}})\left(\frac{1}{|2^mQ_j^k|}\int_{Q_j^k}g\right)\chi_{2^mQ_j^k}(x).
$$

Applying Proposition \ref{prhp}, we decompose the cubes $Q_j^k$ into $2^n$ disjoint families $F_{i}$ such that for any $Q_j^k\in F_{i}$ there exists
a cube $P_{j,k}^{m,i}\in {\mathscr{D}}_{i}$ such that $2^mQ_j^k\subset P_{j,k}^{m,i}$ and
$\ell_{P_{j,k}^{m,i}}\le 6\ell_{2^mQ_j^k}$. Hence,
\begin{equation}\label{est}
{\mathscr{M}}_{m}^{\mathcal S}(f,g)(x)\le 6^{2n}\sum_{i=1}^{2^n} {\mathscr{M}}_{i,m}^{\mathcal S}(f,g)(x),
\end{equation}
where
$$
{\mathscr{M}}_{i,m}^{\mathcal S}(f,g)(x)
=\sum_{j,k}(f_{P_{j,k}^{m,i}})\left(\frac{1}{|P_{j,k}^{m,i}|}\int_{Q_j^k}g\right)\chi_{P_{j,k}^{m,i}}(x).
$$

The following statement was obtained in \cite{DLP}.

\begin{lemma}\label{mult}
Suppose that the sum defining ${\mathscr{M}}_{i,m}^{\mathcal S}(f,g)$ is finite. Then there are at most $2^n$ cubes $Q_{\nu}\in {\mathscr{D}}_{i}$
covering the support of ${\mathscr{M}}_{i,m}^{\mathcal S}(f,g)$ so that for any $Q_{\nu}$
there are two sparse families ${\mathcal S}_{i,1}$ and ${\mathcal S}_{i,2}$ from ${\mathscr{D}}_{i}$ such that
for a.e. $x\in Q_{\nu}$,
$${\mathscr{M}}_{i,m}^{\mathcal S}(f,g)(x)\le c_nm\sum_{\kappa=1}^2\sum_{Q_j^k\in {\mathcal S}_{i,\kappa}}
f_{Q_j^k}g_{Q_j^k}\chi_{Q_j^k}(x).$$
\end{lemma}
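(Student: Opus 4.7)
The proof is a bilinear sparse-domination argument in the style of \cite{DLP}, run in the grid ${\mathscr{D}}_i$. Since the defining sum is finite, the support of ${\mathscr{M}}_{i,m}^{\mathcal S}(f,g)$ has bounded diameter, so it is covered by at most $2^n$ cubes $Q_\nu\in{\mathscr{D}}_i$ of any sufficiently large common side length; the estimate will then be proved on each $Q_\nu$ separately. From here on, fix such a $Q_\nu$ and retain only the indices $(j,k)$ with $P_{j,k}^{m,i}\subseteq Q_\nu$.

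Inside $Q_\nu$, I would construct a Calder\'on--Zygmund stopping tree ${\mathcal F}\subseteq{\mathscr{D}}_i$: starting from $Q_\nu$, the children of $F\in{\mathcal F}$ are the maximal dyadic $F'\subsetneq F$ in ${\mathscr{D}}_i$ on which either $f_{F'}>Kf_F$ or $g_{F'}>Kg_F$, with $K=K(n)$ large enough that the total measure of the children is at most $|F|/2$. Those cubes at which the $f$-condition triggered are placed in ${\mathcal S}_{i,1}$ and those at which the $g$-condition triggered in ${\mathcal S}_{i,2}$; both are sparse subsets of ${\mathscr{D}}_i$.

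For each $(j,k)$, let $\pi(j,k)\in{\mathcal F}$ be the minimal ancestor of $P_{j,k}^{m,i}$. The free-region property of ${\mathcal F}$ gives $f_{P_{j,k}^{m,i}}\le Kf_{\pi(j,k)}$, and since $Q_j^k\subseteq P_{j,k}^{m,i}$,
\[
\frac{1}{|P_{j,k}^{m,i}|}\int_{Q_j^k}g\le g_{P_{j,k}^{m,i}}\le K g_{\pi(j,k)}.
\]
Inserting these pointwise estimates into ${\mathscr{M}}_{i,m}^{\mathcal S}(f,g)(x)$ reduces the lemma to the combinatorial overlap bound
\[
\sum_{(j,k):\,\pi(j,k)=F}\chi_{P_{j,k}^{m,i}}(x)\le c_n m\qquad(\text{a.e.\ }x\in F).
\]

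The main obstacle is this overlap estimate. Its proof rests on the scale constraint $\ell_{P_{j,k}^{m,i}}\in[2^m,6\cdot 2^m]\,\ell_{Q_j^k}$, which pins each $P_{j,k}^{m,i}$ to within three dyadic generations of $2^m\ell_{Q_j^k}$, together with the sparsity of $\{Q_j^k\}$ (the disjoint major subsets $E_j^k\subseteq Q_j^k\setminus\Omega_{k+1}$ with $|E_j^k|\ge|Q_j^k|/2$). These two ingredients together cap the vertical pile-up of enlargements through a common point at $c_n m$ levels, and splitting the surviving contributions by whether the relevant stopping was triggered by $f$ or by $g$ produces the two-family form of the conclusion.
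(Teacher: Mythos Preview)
Your approach is different from the paper's: the paper does not run a direct principal-cubes stopping argument, but rather invokes \cite[Lemma~4.2]{DLP}, whose proof applies the local mean oscillation decomposition (Theorem~\ref{decom1}) to the function ${\mathscr{M}}_{i,m}^{\mathcal S}(f,g)$ itself, together with an oscillation estimate in the spirit of \cite[Lemma~3.2]{Ler}. There the factor $m$ emerges from an $L^1$/Carleson-type count, not from a pointwise overlap bound.

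More seriously, your reduction has a genuine gap. The step
\[
\frac{1}{|P_{j,k}^{m,i}|}\int_{Q_j^k}g\le g_{P_{j,k}^{m,i}}
\]
discards the small ratio $|Q_j^k|/|P_{j,k}^{m,i}|\approx 2^{-mn}$, and once that factor is gone the overlap bound you reduce to,
\[
\sum_{(j,k):\,\pi(j,k)=F}\chi_{P_{j,k}^{m,i}}(x)\le c_n m,
\]
is simply false. Take $f=g\equiv 1$ on a large cube; then no stopping ever triggers, ${\mathcal F}=\{Q_\nu\}$, and every $(j,k)$ with $P_{j,k}^{m,i}\subseteq Q_\nu$ has $\pi(j,k)=Q_\nu$. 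Choose as sparse family a nested chain $Q_0\supset Q_1\supset\cdots\supset Q_N$ with $\ell_{Q_{l+1}}=\ell_{Q_l}/4$ sharing a common corner; all the enlargements $P_l\supseteq 2^mQ_l$ contain that corner, so the left-hand side equals $N$ there, with $N$ arbitrary. Sparsity of $\{Q_j^k\}$ only controls $\sum|Q_j^k|$ through the disjoint pieces $E_j^k$; the indicators you are summing are those of the $P_{j,k}^{m,i}$, which are $\approx 2^{mn}$ times larger, and sparsity cannot cap their pointwise pile-up by $c_n m$. In the argument of \cite{DLP} the factor $|Q_j^k|/|P_{j,k}^{m,i}|$ is never thrown away; it is exactly what converts the Carleson packing of the $Q_j^k$ into the linear-in-$m$ oscillation bound that feeds Theorem~\ref{decom1}.
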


Observe that the proof of Lemma \ref{mult} is based on Theorem \ref{decom1} along with \cite[Lemma 3.2]{Ler}.
Formally Lemma \ref{mult} follows from \cite[Lemma 4.2]{DLP} taking there $m=2$ (which corresponds to a bilinear case) and $l=m$, and from the subsequent argument in \cite[Section 4.2]{DLP}.

Let $X$ be a Banach function space, and let $X'$ denote the associate space (see \cite[Ch. 1]{BS}).
Given a Banach function space $X$, denote by $X^{(2)}$ the space endowed with the norm
$$\|f\|_{X^{(2)}}=\||f|^2\|_{X}^{1/2}.$$ It is well known \cite[Ch. 1]{LT} that
$X^{(2)}$ is also a Banach space.

\begin{lemma}\label{estim} For any Banach function space $X$,
$$\sup_{{\mathcal S}\in {\mathcal D}}\|{\mathcal T}^{\mathcal S}_{2,m}f\|_{X^{(2)}}\le c_nm^{1/2}\max_{1\le i\le 2^n}\sup_{{\mathcal S}\in {\mathscr{D}}_{i}}\|{\mathcal T}^{\mathcal S}_{2,0}f\|_{X^{(2)}}.$$
\end{lemma}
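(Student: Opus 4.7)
The plan is to exploit the duality of Banach function spaces and reduce the $X^{(2)}$-norm estimate to a bilinear form that is controlled by the operator ${\mathscr M}_{m}^{\mathcal S}$. Assume without loss of generality that $f\ge 0$. Writing out the definitions,
$$\|{\mathcal T}^{\mathcal S}_{2,m}f\|_{X^{(2)}}^{2}=\Big\|\sum_{j,k}(f_{2^{m}Q_{j}^{k}})^{2}\chi_{Q_{j}^{k}}\Big\|_{X}=\sup_{\substack{g\ge 0\\ \|g\|_{X'}\le 1}}\sum_{j,k}(f_{2^{m}Q_{j}^{k}})^{2}\int_{Q_{j}^{k}}g,$$
where I have used the standard duality $\|h\|_{X}=\sup_{\|g\|_{X'}\le 1}\int hg$ valid on any Banach function space.

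Next, I would rewrite one factor of $f_{2^{m}Q_{j}^{k}}$ as $\frac{1}{|2^{m}Q_{j}^{k}|}\int_{2^{m}Q_{j}^{k}}f$ and recognize the resulting integrand as the bilinear operator from the previous paragraph:
$$\sum_{j,k}(f_{2^{m}Q_{j}^{k}})^{2}\int_{Q_{j}^{k}}g=\int_{{\mathbb R}^{n}}f(x)\,{\mathscr M}_{m}^{\mathcal S}(f,g)(x)\,dx.$$
Now I invoke the pointwise decomposition (\ref{est}) together with Lemma \ref{mult} (after a routine truncation to finite subfamilies to satisfy its hypothesis). This yields
$${\mathscr M}_{m}^{\mathcal S}(f,g)(x)\le c_{n}\,m\sum_{i=1}^{2^{n}}\sum_{\kappa=1}^{2}\sum_{Q\in {\mathcal S}_{i,\kappa}}f_{Q}\,g_{Q}\,\chi_{Q}(x),$$
where each ${\mathcal S}_{i,\kappa}$ is a sparse family in the dyadic grid ${\mathscr D}_{i}$ (depending on $g$).

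Integrating against $f$ and using $f_{Q}\int_{Q}f=f_{Q}^{2}|Q|$, the key identity is reversed: $\int_{Q}f\cdot f_{Q}g_{Q}=f_{Q}^{2}\int_{Q}g$, so that
$$\int_{{\mathbb R}^{n}}f\cdot{\mathscr M}_{m}^{\mathcal S}(f,g)\,dx\le c_{n}m\sum_{i,\kappa}\sum_{Q\in {\mathcal S}_{i,\kappa}}f_{Q}^{2}\int_{Q}g\le c_{n}m\sum_{i,\kappa}\|{\mathcal T}^{{\mathcal S}_{i,\kappa}}_{2,0}f\|_{X^{(2)}}^{2},$$
by the same duality applied to the $X$-norm of $\sum_{Q}f_{Q}^{2}\chi_{Q}$ and the fact that $\|g\|_{X'}\le 1$. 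Taking the supremum over admissible $g$, bounding the right-hand side by the quantity on the right of the lemma, and extracting the square root delivers the desired $m^{1/2}$ factor.

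The main obstacle is largely bookkeeping rather than analytic: one must carefully track that the sparse families ${\mathcal S}_{i,\kappa}$ produced by Lemma \ref{mult} lie in the fixed grids ${\mathscr D}_{i}$, so that the final supremum can be taken uniformly in ${\mathcal S}\in {\mathscr D}_{i}$ independently of $g$. The truncation argument needed to apply Lemma \ref{mult} (which assumes the defining sum is finite) is standard: run the argument on a finite subfamily and pass to the limit by monotone convergence, using that the constants $c_{n}m^{1/2}$ on the right are independent of the truncation.
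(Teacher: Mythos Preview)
Your proposal is correct and follows essentially the same route as the paper: duality to linearize the $X^{(2)}$-norm, the identity $\sum_{j,k}(f_{2^mQ_j^k})^2\int_{Q_j^k}g=\int f\,{\mathscr M}_m^{\mathcal S}(f,g)$, then (\ref{est}) together with Lemma~\ref{mult}, and finally duality again to recover $\|{\mathcal T}^{{\mathcal S}_{i,\kappa}}_{2,0}f\|_{X^{(2)}}^2$. The only cosmetic point is that Lemma~\ref{mult} produces sparse families depending also on the covering cubes $Q_\nu$, so the paper integrates over each $Q_\nu$ separately before summing; your single pointwise line tacitly absorbs this extra factor of at most $2^n$ into $c_n$, which is harmless.
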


\begin{proof}
By the standard argument, one can assume that the sum defining ${\mathcal T}^{\mathcal S}_{2,m}f$ is finite.
Fix ${\mathcal S}\in {\mathcal D}$.
By duality, there exists $g\ge 0$ with $\|g\|_{X'}=1$ such that
\begin{eqnarray}
\|{\mathcal T}^{\mathcal S}_{2,m}f\|_{X^{(2)}}^2&=&\int_{{\mathbb R}^n}
({\mathcal T}^{\mathcal S}_{2,m}f)^2g\,dx=\sum_{j,k}(f_{2^mQ_j^k})^2\int_{Q_j^k}g\label{imes}\\
&=&\int_{{\mathbb R}^n}{\mathscr{M}}_{m}^{\mathcal S}(f,g)f\,dx.\nonumber
\end{eqnarray}
Observe that the sum defining ${\mathscr{M}}_{m}^{\mathcal S}(f,g)$ is finite.
By Lemma \ref{mult} and by H\"older's inequality,
\begin{eqnarray*}
\int_{Q_{\nu}}{\mathscr{M}}_{i,m}^{\mathcal S}(f,g)f\,dx&\le& c_nm\sum_{\kappa=1}^2\sum_{Q_j^k\in {\mathcal S}_{i,\kappa}}
(f_{Q_j^k})^2\int_{Q_j^k}g\\
&\le& c_nm\sum_{\kappa=1}^2\int_{{\mathbb R}^n}
({\mathcal T}^{{\mathcal S}_{i,\kappa}}_{2,0}f)^2g\,dx\\
&\le& 2c_nm\sup_{{\mathcal S}\in {\mathscr{D}}_{i}}\|{\mathcal T}^{\mathcal S}_{2,0}f\|_{X^{(2)}}^2.
\end{eqnarray*}
Summing up over $Q_{\nu}$ and using (\ref{est}), we obtain
$$
\int_{{\mathbb R}^n}{\mathscr{M}}_{m}^{\mathcal S}(f,g)f\,dx\le c_nm\max_{1\le i\le 2^n}
\sup_{{\mathcal S}\in {\mathscr{D}}_{i}}\|{\mathcal T}^{\mathcal S}_{2,0}f\|_{X^{(2)}}^2.
$$
This along with (\ref{imes}) completes the proof.
\end{proof}

\subsection{Proof of Theorem \ref{mainr}}
Let $Q\in {\mathcal D}$. Applying Theorem \ref{decom1} along with Lemma \ref{oscg}, we get
that there exists a sparse family ${\mathcal S}=\{Q_j^k\}\in {\mathcal D}(Q)$ such that for a.e. $x\in Q$,
$$|\widetilde S_{\a,\psi}(f)(x)^2-m_{Q}(\widetilde S_{\a,\psi}(f)^2)|\le c_{n,\psi}\a^{2n}\Big(Mf(x)^2+\sum_{m=0}^{\infty}\frac{1}{2^{m\d}}\big({\mathcal T}^{\mathcal S}_{2,m}f(x)\big)^2\Big).$$
Hence,
\begin{equation}\label{laseq}
|\widetilde S_{\a,\psi}(f)^2-m_{Q}(\widetilde S_{\a,\psi}(f)^2)|^{1/2}\le c_{n,\psi}\a^n\big(Mf(x)+{\mathcal T}(f)(x)\big),
\end{equation}
where
$${\mathcal T}(f)(x)=\sum_{m=0}^{\infty}\frac{1}{2^{m\d/2}}{\mathcal T}^{\mathcal S}_{2,m}f(x).$$

Assuming, for instance, that $f\in L^1$, and using (\ref{pro1}) and (\ref{weak1}), we get
$$\lim_{|Q|\to \infty}m_{Q}(\widetilde S_{\a,\psi}(f)^2)=0.$$
Therefore, letting $Q$ to anyone of $2^n$ quadrants and
using Fatou's lemma, by (\ref{laseq}) we obtain
\begin{equation}\label{gxp1}
\|\widetilde S_{\a,\psi}(f)\|_{L^p(w)}\le c_{n,\psi}\a^n\big(\|Mf\|_{L^p(w)}+\|{\mathcal T}(f)\|_{L^p(w)}\big).
\end{equation}

Combining Lemma \ref{norms} and Lemma \ref{estim} with $X=L^{3/2}(w)$ yields
\begin{eqnarray*}
\|{\mathcal T}(f)\|_{L^3(w)}&\le& \sum_{m=0}^{\infty}\frac{1}{2^{m\d/2}}\|{\mathcal T}^{\mathcal S}_{2,m}f\|_{L^3(w)}\\
&\le& c_n\sum_{m=0}^{\infty}\frac{m^{1/2}}{2^{m\d/2}}
\max_{1\le i\le 2^n}\sup_{{\mathcal S}\in {\mathscr{D}}_{i}}\|{\mathcal T}^{\mathcal S}_{2,0}f\|_{L^3(w)}\\
&\le& c_{n,\d}[w]_{A_3}^{1/2}\|f\|_{L^3(w)}.
\end{eqnarray*}
Hence, by the sharp version of the Rubio de Francia extrapolation theorem (see \cite{DGPP} or \cite{D}),
\begin{equation}\label{esti1}
\|{\mathcal T}(f)\|_{L^p(w)}\le c_{n,p,\d}[w]_{A_p}^{\max(\frac{1}{2},\frac{1}{p-1})}\|f\|_{L^p(w)}\quad(1<p<\infty).
\end{equation}
Thus, applying this result along with Buckley's estimate $\|M\|_{L^p(w)}\le c_{n,p}[w]_{A_p}^{\frac{1}{p-1}}$ (see \cite{B}) and (\ref{gxp1}),
we get
$$
\|S_{\a,\psi}\|_{L^p(w)}\le \|\widetilde S_{\a,\psi}\|_{L^p(w)}\le c_{n,p,\psi}\a^n[w]_{A_p}^{\max(\frac{1}{2},\frac{1}{p-1})},
$$
and therefore, the proof is complete.

\section{Concluding remarks}
In a recent work \cite{LS}, the following weak type estimate was obtained for $G_{\b}(f)$ (and hence for $S_{\psi}(f)$):
if $1<p<3$, then
$$\|G_{\b}(f)\|_{L^{p,\infty}(w)}\lesssim [w]_{A_p}^{\max(\frac{1}{2},\frac{1}{p})}\Phi_p([w]_{A_p})\|f\|_{L^p(w)},$$
where $\Phi_p(t)=1$ if $1<p<2$ and $\Phi_p(t)=1+\log t$ if $p\ge 2$. The proof was based on the local mean oscillation decomposition
technique along with the estimate
\begin{equation}\label{estt}
\|{\mathcal T}^{\mathcal S}_{2,0}f\|_{L^{p,\infty}(w)}\lesssim [w]_{A_p}^{\max(\frac{1}{2},\frac{1}{p})}\Phi_p([w]_{A_p})\|f\|_{L^p(w)}.
\end{equation}

Since the space $L^{p,\infty}(w)$ is normable if $p>1$ (see, e.g., \cite[p. 220]{BS}), combining Lemma \ref{estim} with $X=L^{1+\e,\infty}(w), \e>0,$ and (\ref{estt})
yields for $2<p<3$ that
\begin{equation}\label{estt1}
\|{\mathcal T}f\|_{L^{p,\infty}(w)}\lesssim [w]_{A_p}^{\max(\frac{1}{2},\frac{1}{p})}\Phi_p([w]_{A_p})\|f\|_{L^p(w)}.
\end{equation}
Hence, exactly as above, by (\ref{laseq}) (and by the weak type estimate for $M$ proved in \cite{B}), we obtain
$$
\|S_{\a,\psi}(f)\|_{L^{p,\infty}(w)}\lesssim \a^n[w]_{A_p}^{\max(\frac{1}{2},\frac{1}{p})}\Phi_p([w]_{A_p})\|f\|_{L^p(w)}\quad(2<p<3).
$$

We emphasize that our approach does not allow to extend this estimate to $1<p\le 2$. This is clearly related to the same problem with (\ref{estt1}).
The limitation $2<p<3$ in (\ref{estt1}) is due to Lemma \ref{estim} where the condition that $X$ is a Banach function space was essential in the proof.
This raises a natural question whether Lemma \ref{estim} holds under the condition that $X$ is a quasi-Banach space. Observe that the same question can
be asked regarding a recent estimate related $X$-norms of Calder\'on-Zygmund and dyadic positive operators \cite{Ler1}.

\end{document}